\documentclass{article}
\usepackage{amsfonts}
\usepackage{amsmath}
\usepackage{amssymb}
\usepackage{amsthm}
\usepackage{fullpage}
\usepackage{cleveref}
\usepackage{mathtools}
\usepackage{url}

\title{An elementary proof of the Koml\'os conjecture}
\author{
Sankeerth Rao Karingula\\
{Agentin AI}\\
{\texttt{sankeerthrao@agentin.ai}}
\and
Shachar Lovett\thanks{Supported by Simons Investigator Award \#929894 and NSF award CCF-2425349.}\\
{University of California, San Diego}\\
{\texttt{slovett@ucsd.edu}}
}

\newtheorem{theorem}{Theorem}[section]
\newtheorem{lemma}[theorem]{Lemma}
\newtheorem{claim}[theorem]{Claim}
\newtheorem{definition}[theorem]{Definition}

\newtheorem{conjecture}[theorem]{Conjecture}

\crefname{claim}{claim}{claims}
\Crefname{claim}{Claim}{Claims}

\newcommand{\R}{\mathbb{R}}
\newcommand{\Q}{\mathbb{Q}}

\newcommand{\eps}{\varepsilon}
\newcommand{\SD}[2]{d_{\mathrm{TV}}(#1,#2)}
\newcommand{\SDS}[2]{\Delta(#1,#2)}
\newcommand{\supp}{\operatorname{supp}}

\newcommand{\norm}[1]{\left \| #1 \right \|}

\begin{document}

\maketitle

\begin{abstract}
We give an elementary proof of the Koml\'os conjecture by simplifying
the recent proof of Guo, Fang, and Lu.
We show that any vectors $v_1,\ldots,v_n\in\mathbb{R}^d$ with
$\|v_i\|_2\le1$ admit signs $\varepsilon_i\in\{-1,1\}$ such that
$\|\sum_{i=1}^n\varepsilon_i v_i\|_\infty\le36$.
The proof uses only elementary combinatorial and probabilistic
arguments and basic calculus.
\end{abstract}

\section{Introduction}

The Koml\'os conjecture is a longstanding problem in discrepancy
theory, recorded by Spencer~\cite{spencer1987ten}.
It generalizes the $O(\sqrt{n})$ discrepancy bound of
Spencer~\cite{spencer1985six} and contains the conjecture of
Beck and Fiala~\cite{beck1981integer} as a special case.

We write $\|\cdot\|_2$ and $\|\cdot\|_\infty$ for the Euclidean
and maximum norms, respectively.

\begin{conjecture}[Koml\'os conjecture]
There is a universal constant $C>0$ such that the following holds
for all positive integers $n,d$.
Let $v_1,\ldots,v_n \in \R^d$ satisfy $\|v_i\|_2 \le 1$.
Then there exist signs $\eps_1,\ldots,\eps_n \in \{-1,1\}$ such that
\[
\norm{\sum_{i=1}^n \eps_i v_i}_{\infty} \le C.
\]
\end{conjecture}

\paragraph{Progress toward the conjecture.}
Banaszczyk~\cite{banaszczyk1998balancing} established the bound
$O(\sqrt{\log n})$ through his vector-balancing theorem.
Subsequent algorithmic progress includes the Gram--Schmidt walk
of Bansal, Dadush, Garg, and Lovett~\cite{bansal2018gram},
which gives a constructive version of Banaszczyk's
vector-balancing theorem.
Bansal and Jiang~\cite{bansal2026decoupling,bansal2026exposition}
obtained the first asymptotic improvement over
Banaszczyk's bound, proving
$O((\log n)^{1/4}(\log\log n)^{7/4})$.
Ercan~\cite{ercan2026log} subsequently removed the iterated-logarithmic
factor, obtaining $O((\log n)^{1/4})$.

\paragraph{Recent breakthrough.}
In September 2026, Guo, Fang, and Lu~\cite{guo2026vector} gave a proof
of the Koml\'os conjecture with $C=3\sqrt{2\pi}$.
The authors credit the Odin Automatic AI Research Agent with
discovering the proof.
Their argument uses Banaszczyk's convex-body transform and
a variational analysis of the directional total variation
of probability densities.
Its starting density and translation-overlap estimates build on
Smirnov and Vershynin~\cite{smirnov2026fisher}, who relate Fisher
information to online discrepancy with discarded steps.
The present work grew out of an effort to understand their
argument and extract an elementary proof.

\paragraph{Our results.}
We give an elementary proof of the Koml\'os conjecture,
combining a combinatorial balancing argument with a continuous
construction followed by discretization. The proof does not
require advanced convex geometry or analysis.
Our proof yields the constant $36$, larger than
$3\sqrt{2\pi}$ in~\cite{guo2026vector}.
We have prioritized clarity of exposition and made no attempt
to optimize the constant.
We note that our proof gives a finite construction for rational inputs, but does not establish a polynomial-time algorithm.

\begin{theorem}
\label{thm:komlos}
Let $v_1,\ldots,v_n \in \R^d$ satisfy $\|v_i\|_2 \le 1$.
Then there exist signs $\eps_1,\ldots,\eps_n \in \{-1,1\}$ such that
\[
\norm{\sum_{i=1}^n \eps_i v_i}_{\infty} \le 36.
\]
\end{theorem}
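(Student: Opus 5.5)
We plan to follow the strategy announced in the introduction: a combinatorial balancing reduction, then a continuous construction, then discretization. First come the standard reductions. By a compactness/limiting argument, it suffices to treat rational (or generic) inputs. By a linear-algebraic rounding step (partial coloring via linear dependence), we may assume $n\le d$ in the relevant sense, or reduce to fractional colorings $x\in[-1,1]^n$ whose ``floating'' coordinates number at most the number of tight constraints. Concretely, we will aim to prove a \emph{partial-coloring} statement with a geometric-decay structure. We will show there is $x\in[-1,1]^n$ with at least half of the coordinates in $\{-1,1\}$ and $\|\sum_i x_i v_i\|_\infty\le c$. We will also arrange that the error incurred on the remaining coordinates shrinks fast enough that iterating the statement on the unfixed vectors sums to a constant. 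This sum is where a constant like $36$ would come from.

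The core is a Banaszczyk-type statement replacing convex-geometry machinery. Let $K=\{y:\|y\|_\infty\le c\}$. We want to show that the random signed sum, or a suitable smoothing of it, hits $K$. Our continuous construction will be a density $f$ on $\R^d$, for instance a product of one-dimensional densities adapted to the slab constraints $|\langle e_j,y\rangle|\le c$. We process the vectors one at a time. At step $i$ we choose $\eps_i$, or a randomized version of it, so that the translates $f(\cdot-v_i)$ and $f(\cdot+v_i)$ are combined into a density still dominated by a constant multiple of $f$ on the relevant region.

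The key estimate is a translation-overlap bound. For $\|v\|_2\le1$, the total variation distance between $f$ and $f(\cdot-v)$ is bounded by roughly $\|v\|_2$ times a Fisher-information-type quantity. For a product density, this quantity is controlled coordinatewise, using only one-dimensional calculus. This bound will let us ``discard'' a bounded fraction of mass at each step, in the manner of Smirnov--Vershynin, while keeping the walk inside $K$ with positive probability.

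Finally, we discretize. The continuous process gives a fractional or randomized coloring whose support lies in $K$ with positive probability. A combinatorial balancing argument then converts it into actual signs: we pair coordinates and use linear dependence among the floating vectors to push variables to $\pm1$ without leaving a slightly enlarged body. Throughout we will track constants so that the final bound is $36$.

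The main obstacle will be the translation-overlap/total-variation estimate. It has to hold uniformly in $d$, which is exactly what distinguishes the constant bound from Banaszczyk's $\sqrt{\log n}$. We would first try to reduce it to a one-dimensional inequality via a variational (directional derivative) argument: differentiate $t\mapsto d_{\mathrm{TV}}(f,f(\cdot-tv))$ and bound the derivative by $\int|\langle\nabla f,v\rangle|$. We would then apply Cauchy--Schwarz against the product structure so that $\sum_j v_j^2\le1$ enters rather than $d$.
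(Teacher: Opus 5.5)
There is a genuine gap. The proposal never gives a mechanism that turns near-invariance of a density under translation by the $v_i$ into actual signs, and that conversion, not the overlap estimate, is the heart of the proof.

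The estimate you call the main obstacle takes a few lines once the density is a product supported in the cube. Your route works provided the Cauchy--Schwarz step is weighted by $F$ (equivalently, done on $\sqrt F$ as in the paper). For $F(x)=\prod_k b(x_k)^2$ the mixed terms of $\int(v\cdot\nabla\log F)^2F$ vanish, giving $\frac12\int|v\cdot\nabla F|\le\|v\|_2/\sqrt{12}$, the bound of \Cref{lemma:continuous-cube}.

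The rest of the plan does not work as stated. Partial coloring with geometrically decaying error needs a parameter that shrinks from round to round. But the unfixed vectors still have norm up to $1$, so each round costs a constant and you get $O(\log n)$; producing the decay is precisely the difficulty. Merging the translates $f(\cdot\mp v_i)$ into something ``dominated by a constant multiple of $f$'' while discarding a bounded fraction of mass comes with no invariant. After discarding and renormalizing, nothing controls the overlap of the new density with its translates by the remaining $v_j$, so the estimate you want to reuse at later steps is unavailable, and constant-factor slack compounds over $n$ steps. If the walk is also meant to stay in $K$ after every step in the given order, it would bound prefix discrepancy, which the paper notes is false in general. Finally, rounding floating variables by linear dependence is Beck--Fiala-type iterated rounding. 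For Koml\'os inputs this gives only bounds growing with $n$, because rows with large $\ell_1$ mass on the floating variables can outnumber those variables.

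The missing idea is \Cref{lemma:balancing}: if a finitely supported $P$ has $\Delta(P,6v_i)\le1/3$ for all $i$, then $\mu(P)+\sum_i\eps_iv_i\in\operatorname{conv}(\supp(P))$ for some signs. Applied to $v_i/6$ and the mean-zero distribution of \Cref{lemma:cube} supported in $[-6,6]^d$, it yields $6\cdot6=36$ directly from the support, with no walk to control and no geometric series. Its hypothesis is only small shift distance, not Gaussian measure of a body, so a compactly supported product density suffices.

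Its proof splits $P$ along $3v_n$ into $Q=T_{3v_n}P$ on $\R^{d+1}$, discarding nothing. The overlap of the two translates is recorded in the bit-$1$ layer, each of whose states has both parents $x\pm3v_n$ in $\supp(P)$, and this layer has mass $\beta=(1-\Delta(P,6v_n))/2\ge1/3$. By \Cref{claim:shift-contraction} the hypothesis for $v_1,\ldots,v_{n-1}$ passes to $Q$ with no loss. Induction in dimension $d+1$, with the bit coordinate kept in the target point, then returns a convex combination that still puts mass $\beta$ on two-parent states. Only then is $\eps_n$ chosen, and that mass absorbs the pullback error $|\eps_n-a|\le1$.

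These are the two features your plan lacks: an invariant preserved exactly, and a sign fixed only after the recursive call returns. Relatedly, the paper's discretization rounds the density $F$ to a grid $N^{-1}\mathbb{Z}^d$ containing the rational $v_i$, preserving the shift bound, symmetry and support, rather than rounding a fractional coloring.
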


\paragraph{Notation.}
For a nonnegative function $P$ on a domain $X$, write
$\supp(P):=\{x\in X:P(x)>0\}$ for its support.
In the discrete parts of the proof, all probability distributions
have finite support and total mass $\sum_xP(x)=1$.
We write $\Pr_P[A]:=\sum_{x\in A}P(x)$ for the probability of an
event $A$, and $\mu(P):=\sum_xP(x)x$ for the mean when $X=\R^d$.
For $S\subseteq\R^d$, $\operatorname{conv}(S)$ denotes its convex
hull. In particular, $\mu(P)\in\operatorname{conv}(\supp(P))$.

\paragraph{Proof approach.}
Our proof is based on distributions that are nearly invariant under
prescribed translations. The central quantity is their \emph{shift
distance}, which we now define.

For finitely supported probability distributions $P,Q$ on the same
domain, their statistical distance (or total variation distance) is
$\SD{P}{Q}:=\frac12\sum_x|P(x)-Q(x)|$.

\begin{definition}[Shift distance]
\label{def:shift-distance}
Let $P$ be a finitely supported probability distribution on $\R^d$,
and let $u\in\R^d$. Write $P+u$ for the distribution of $X+u$
when $X\sim P$; thus $(P+u)(x)=P(x-u)$.
The \emph{shift distance} of $P$ at $u$ is
\[
\SDS{P}{u}
:=\SD{P}{P+u}
=\frac12\sum_{x\in\R^d}|P(x)-P(x-u)|.
\]
\end{definition}

Thus, small shift distance means that $P$ changes little under the
specified translation.
Equivalently, since
\[
\sum_{x\in\R^d}\min\{P(x),P(x-u)\}=1-\SDS{P}{u},
\]
$\SDS{P}{u}\le\delta$ means that $P$ and its translate have at
least $1-\delta$ of their probability mass in common. This overlap
interpretation is what makes shift distance useful in the splitting
argument below.

The proof has two main parts. First, we show that the mean of a
distribution with small shift distances at $6v_1,\ldots,6v_n$ can
be shifted by a signed sum of the $v_i$ while remaining in the
convex hull of its support. We allow an arbitrary mean so that
this statement can be applied inductively after adding a coordinate.
Second, we construct a mean-zero distribution with the required
shift bounds in a cube by rounding a continuous product density
to a rational grid.

\begin{lemma}[From near invariance to signed sums]
\label{lemma:balancing}
Let $v_1,\ldots,v_n\in\R^d$. Suppose $P$ is a finitely supported
probability distribution on $\R^d$ with
\[
\SDS{P}{6v_i}\le\frac13
\qquad (i=1,\ldots,n).
\]
Then there are signs $\eps_1,\ldots,\eps_n\in\{-1,1\}$ such that
\[
\mu(P)+\sum_{i=1}^n\eps_i v_i\in\operatorname{conv}(\supp(P)).
\]
\end{lemma}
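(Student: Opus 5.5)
The plan is to proceed \emph{by induction on $n$}, moving the mean by $\pm v_1$ through a redistribution of $P$'s own mass, and then handing a modified distribution to the remaining vectors. The overlap interpretation of shift distance is the basic tool. Fix $u=6v_1$ and let $M(x):=\min\{P(x),P(x-u)\}$, so that $M$ has total mass $m:=1-\SDS{P}{u}\ge 2/3$, with $M\le P$ and $M\le P+u$.

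The first step is to build, for a parameter $\lambda\in[0,1]$, the measure
\[
P_\lambda := P - \lambda\,(M-u) + \lambda M ,
\]
where $M-u$ denotes the translate $x\mapsto M(x+u)$. This is nonnegative because $M(x+u)\le P(x)$, so $\lambda M(x+u)\le P(x)$ pointwise. It has total mass $1$. Its support lies in $\supp(P)\cup\supp(M)\subseteq \supp(P)$. Its mean is
\[
\mu(P_\lambda)=\mu(P)+\lambda m u=\mu(P)+6\lambda m\,v_1 .
\]
Choosing $\lambda=1/(6m)\le 1/4$ gives $\mu(P_\lambda)=\mu(P)+v_1$. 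The same construction with $-u$ in place of $u$, using $\SDS{P}{-u}=\SDS{P}{u}$, gives mean $\mu(P)-v_1$. So after one step we have a distribution whose support stays inside $\supp(P)$ and whose mean has moved by $\eps_1v_1$. Applying the induction hypothesis to this distribution and $v_2,\ldots,v_n$ then yields
\[
\mu(P)+\sum_i\eps_i v_i\in\operatorname{conv}(\supp(P_\lambda))\subseteq\operatorname{conv}(\supp(P)).
\]
This is why the lemma is phrased with an arbitrary mean and with the conclusion in the convex hull.

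The main obstacle is that the induction hypothesis requires $\SDS{P_\lambda}{6v_i}\le 1/3$ for $i\ge2$. The naive bound only gives
\[
\SDS{P_\lambda}{6v_i}\le \SDS{P}{6v_i}+\lambda m = \SDS{P}{6v_i}+1/6 ,
\]
so the loss does not telescope over $n$ steps. To overcome this, I would not keep $P$ but a \emph{family}: replace the single distribution by a mixture over the two choices of sign, or equivalently carry along a fractional coloring $x\in[-1,1]^n$ and only move each coordinate by small amounts. The working object would then be a distribution $P_x$ whose mean is $\mu(P)+\sum_i x_iv_i$ and whose support lies in $\supp(P)$.

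The step I would try first is to show that the partial moves for different $i$ can be made \emph{simultaneously}, as a product-like composition of the maps $P\mapsto P_\lambda$, with the total mass moved being what is controlled rather than each shift distance. The slack between $m\ge 2/3$ and the needed transport $1/6$ per unit of movement should leave room to push each $x_i$ to $\pm1$ one at a time, in the spirit of iterated rounding: freeze a coordinate once it reaches $\pm1$, and use the fact that moving $x_i$ by $\delta$ costs at most $\delta/(6m)\le \delta/4$ of mass.

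Proving that this budget is not exceeded along the rounding path is the delicate point. Concretely, the shift distances of the current distribution at the unfrozen $6v_j$ must stay below $1/3$, and this is where I expect the real work, and the choice of the constants $6$ and $1/3$, to lie.
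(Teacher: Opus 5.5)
Your one-step move is correct: $P_\lambda$ is nonnegative, supported in $\supp(P)$, and with $\lambda=1/(6m)$ its mean is $\mu(P)\pm v_1$. But the proposal stops exactly where the proof has to happen, and this is a genuine gap.

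\begin{itemize}
\item The hypothesis $\SDS{\cdot}{6v_i}\le 1/3$ has no slack. After even one move you cannot reapply the lemma to $P_\lambda$.
\item In the fractional-coloring or iterated-rounding variant, your only control on the other shift distances is an increase proportional to the mass transported. The total mass transported along the whole path is of order $n$.
\item You give no invariant, potential, or contraction property that would stop these losses from accumulating.
\end{itemize}

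The root cause is the order of operations. You fix a sign first, and then you still need near-invariance of the modified distribution for the remaining vectors.

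The missing idea in the paper is to postpone the choice of the sign by \emph{lifting} instead of moving.

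\textbf{The split.} The paper replaces $P$ by $Q:=T_{3v_n}P$ on $\R^{d+1}$, where $Q(x,0)=\frac12\max\{P(x+3v_n),P(x-3v_n)\}$ and $Q(x,1)=\frac12\min\{P(x+3v_n),P(x-3v_n)\}$.

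\textbf{No loss for the other vectors.} $T_{3v_n}$ is monotone, preserves mass, and commutes with translations by $\bar u=(u,0)$. Applying it to the common mass $\min\{P,P+u\}$ gives $\SDS{Q}{\bar u}\le\SDS{P}{u}$. So the hypothesis for $\bar v_1,\ldots,\bar v_{n-1}$ survives with no loss at all.

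\textbf{What the split records.} It keeps the mean $\mu(P)$ in the first $d$ coordinates. Its new coordinate has mean $\beta=(1-\SDS{P}{6v_n})/2\ge 1/3$, which is the mass of states whose two parents $x\pm 3v_n$ both lie in $\supp(P)$.

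\textbf{Induction.} Induction in dimension $d+1$ yields $R$ on $\supp(Q)$ with mean $(\mu(P)+s,\beta)$. This is why the lemma is stated for all $d$ and arbitrary means. Because the $\bar v_i$ vanish in the last coordinate, $\Pr_R[b=1]=\beta$ is forced.

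\textbf{Pullback.} Only now is $\eps_n$ chosen, via a pullback to $\supp(P)$ that is essentially your $P_\lambda$ move.
\begin{itemize}
\item Zero-bit states go to any available parent, contributing $av$ with $|a|\le 3(1-\beta)\le 2$.
\item One-bit states are divided between $x\pm 3v_n$ to contribute $(\eps_n-a)v$. This is possible since $|\eps_n-a|\le 1\le 3\beta$.
\end{itemize}
At that stage only support containment is needed, never near-invariance.
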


\begin{lemma}[A near-invariant distribution in the cube]
\label{lemma:cube}
Let $v_1,\ldots,v_n\in\Q^d$ satisfy $\|v_i\|_2\le1$.
There is a finitely supported distribution $P$ on $[-6,6]^d$
with mean zero and
\[
\SDS{P}{v_i}\le\frac13
\qquad (i=1,\ldots,n).
\]
\end{lemma}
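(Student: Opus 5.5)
The plan is to build a continuous, symmetric product density on $[-6,6]^d$ whose translates by any unit-length vector overlap heavily. We then discretize it on a fine rational grid that is invariant under all the shifts $v_i$. We measure shifts through the Bhattacharyya coefficient (Hellinger affinity), because it tensorizes over coordinates. This is what makes the bound dimension-free: it will depend only on $\|u\|_2$.

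\emph{Step 1 (one-dimensional profile).} Let $L=12$ and $\psi(t)=\sqrt{2/L}\cos(\pi t/L)$ for $|t|\le 6$, with $\psi=0$ outside. Then $g=\psi^2$ is a continuous, even probability density supported on $[-6,6]$, and it vanishes at $\pm6$. For a shift $a\in\R$ we compare $\psi$ with its translate $\psi(\cdot-a)$. By Cauchy--Schwarz applied to $\psi(t)-\psi(t-a)=\int_0^a\psi'(t-s)\,ds$,
\[
\int(\psi(t)-\psi(t-a))^2\,dt\le a^2\int\psi'^2=a^2\pi^2/L^2 .
\]
Hence the affinity $\rho(a):=\int\psi(t)\psi(t-a)\,dt$ satisfies $1-\rho(a)\le a^2\pi^2/(2L^2)$.

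\emph{Step 2 (tensorize and convert to total variation).} Set $f(x)=\prod_j g(x_j)$. For $u\in\R^d$, the affinity between $f$ and $f(\cdot-u)$ is $\prod_j\rho(u_j)$. Using $\prod_j(1-h_j)\ge1-\sum_j h_j$, this is at least
\[
1-\|u\|_2^2\,\pi^2/(2L^2).
\]
The elementary inequality $\mathrm{TV}\le\sqrt{1-\mathrm{BC}^2}\le\sqrt{2(1-\mathrm{BC})}$ (Cauchy--Schwarz on $|f-f_u|=|\sqrt f-\sqrt{f_u}|(\sqrt f+\sqrt{f_u})$) then gives
\[
\mathrm{TV}(f,f(\cdot-u))\le\pi\|u\|_2/L=\pi/12\approx0.26
\]
for $\|u\|_2\le1$. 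This leaves slack below $1/3$.

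\emph{Step 3 (discretization).} Since the $v_i$ are rational, fix $N$ with all $v_i\in\frac1N\mathbb{Z}^d$. For a large integer $M$, let $G_M=\frac1{NM}\mathbb{Z}^d\cap[-6,6]^d$, and define $P_M(x)=f(x)/Z_M$ on $G_M$, where $Z_M=\sum_{G_M}f$. The grid $\frac1{NM}\mathbb{Z}^d$ is invariant under each $v_i$, and $f$ vanishes outside $[-6,6]^d$. Therefore
\[
\SDS{P_M}{v_i}=\frac1{2Z_M}\sum_{x\in\frac1{NM}\mathbb{Z}^d}|f(x)-f(x-v_i)| .
\]
This is a ratio of Riemann sums of continuous, compactly supported functions. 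As $M\to\infty$ it converges to the continuous total variation, which is at most $\pi/12$. Because there are only finitely many $v_i$, a single large $M$ makes every $\SDS{P_M}{v_i}\le\frac13$. The mean is zero because both $G_M$ and $f$ are symmetric under $x\mapsto -x$.

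The main obstacle is making the limiting argument in Step 3 clean and elementary. We need uniform continuity of $f$ on the compact cube to justify Riemann-sum convergence for the finitely many functions $|f-f(\cdot-v_i)|$ and for $f$ itself. An alternative is to redo Steps 1--2 directly on the grid with discrete sums, using the discrete Cauchy--Schwarz bound with the finite difference of $\psi$ in place of $\psi'$. This avoids limits, at the cost of a slightly messier estimate of $\sum(\Delta\psi)^2$.
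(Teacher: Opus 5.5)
Your proof is correct. Its overall architecture matches the paper's: a symmetric product density on $[-6,6]^d$ whose square root has small $L^2$ increments under translation, converted to a total variation bound by Cauchy--Schwarz, then discretized on a grid containing the $v_i$.

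The details differ in two places. First, the paper uses the tent profile $b(t)=\frac1{12}\max\{6-|t|,0\}$ for the square root and computes the directional energy $\int(v\cdot\nabla f)^2=\|v\|_2^2/12$ directly in $\R^d$, where the cross terms vanish because $\int bb'=0$. This gives $\|v\|_2/\sqrt{12}$. You use the cosine profile, which has the least Dirichlet energy and gives the slightly better $\pi/12$. You also tensorize through the Hellinger affinity, so only one-dimensional calculus is needed. These are essentially the same estimate: run with the tent profile, your route returns exactly $\|u\|_2/\sqrt{12}$.

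Second, and more substantively, you discretize by sampling $f$ at points of a refined grid and passing to a limit of Riemann sums. This requires uniform continuity and uses up the strict slack $\pi/12<1/3$. The paper instead averages the density over grid cells, $P(x)=\int_{x+C}F$; that is, it rounds a sample to the nearest grid point. Rounding commutes with translation by grid vectors, so the triangle inequality gives $\SDS{P}{v}\le\frac12\int|F(z-v)-F(z)|\,dz$ in one line. This works for any grid containing the $v_i$, with no limit, no refinement parameter $M$, and no slack required. It is exactly the limit-free fix for the step you flagged as the main obstacle.
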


\begin{proof}[Proof of \Cref{thm:komlos}]
First suppose the vectors are rational. Take $P$ from
\Cref{lemma:cube} and apply \Cref{lemma:balancing} to the vectors
$v_i/6$. Since $P$ has mean zero, this gives signs such that
\[
\frac16\sum_{i=1}^n\eps_i v_i
\in\operatorname{conv}(\supp(P))\subseteq[-6,6]^d,
\]
and hence $\|\sum_i\eps_i v_i\|_\infty\le36$.
For real vectors, approximate each $v_i$ by rational vectors in the
set $\{x\in\R^d:\|x\|_2\le1\}$, for instance by truncating its coordinates
toward zero. Since there are only finitely many sign vectors, some
fixed choice of signs works for an infinite subsequence of these approximations.
Passing to the limit proves the same bound.
\end{proof}

\paragraph{Prefix discrepancy.}
Our theorem controls the final signed sum.
The \emph{strong Koml\'os conjecture} (also called the prefix
Koml\'os conjecture) asked whether every sequence
$v_1,\ldots,v_n\in\R^d$ with $\|v_i\|_2\le1$ admits signs
$\eps_1,\ldots,\eps_n\in\{-1,1\}$ satisfying
\[
\max_{1\le k\le n}
\norm{\sum_{i=1}^k\eps_i v_i}_{\infty}\le C.
\]
Here $C$ is a universal constant, the order is fixed, and the
signs may depend on the entire sequence.
Prefix balancing goes back to Spencer~\cite{spencer1977balancing};
the constant bound was formulated explicitly by Bansal, Jiang,
Meka, Singla, and Sinha~\cite[Open Problem~6.1 in the full version]{bansal2022prefix}.
Guo, Fang, and Lu~\cite[Section~1.2]{guo2026vector} also note that
their proof does not control all prefixes.
Kintali~\cite{kintali2026strong} disproved the constant-bound
conjecture, constructing square $n\times n$ examples with prefix
discrepancy $\Omega(\sqrt{\log\log n})$.
The best known bound uniform in $d$ remains $O(\sqrt{\log n})$,
due to Banaszczyk~\cite{banaszczyk2012series}.\footnote{
To remove the dimension dependence, let $r_1,\ldots,r_d\in\R^n$
be the rows of the matrix with columns $v_1,\ldots,v_n$.
A row with $\|r_k\|_1\le1$ has discrepancy at most $1$ for every
signing and every prefix. Each remaining row satisfies
$\|r_k\|_2^2\ge\|r_k\|_1^2/n>1/n$.
Since
$\sum_{k=1}^d\|r_k\|_2^2=\sum_{i=1}^n\|v_i\|_2^2\le n$,
fewer than $n^2$ rows remain.
Restricting to these rows does not increase any column's Euclidean
norm. Applying the $O(\sqrt{\log d+\log n})$ prefix bound of Banaszczyk~\cite{banaszczyk2012series}
in this reduced dimension gives $O(\sqrt{\log n})$ for $n\ge2$;
see also~\cite[Theorem~1.1]{bansal2022prefix}.
If no rows remain, or if $n=1$, the discrepancy is at most $1$.
}
Aden-Ali~\cite{aden2026optimal} gives a randomized online algorithm
that, for every input sequence fixed independently of the algorithm's
random choices, chooses each sign as its vector arrives and achieves
$O(\sqrt{\log n})$ prefix discrepancy with high probability.
Determining the optimal growth rate of fixed-order prefix
discrepancy remains open.

\paragraph{Related work.}
In unpublished work, Jain~\cite{jain2026fisher} obtained the constant
$3\pi$ by proving preservation of a Fisher information matrix bound
under Banaszczyk's transform, building on Smirnov--Vershynin and
Guo--Fang--Lu.
Guillen and Kobzar~\cite{guillen2026complex} proved a constant bound
for complex discrepancy, where signs are replaced by complex
coefficients of modulus one, using the Bellman function method
and the framework of Bansal and Jiang.
For real input vectors, this is equivalent to rank-two vector
discrepancy and also yields a constant bound for Gaussian discrepancy.
Akbas and Sra~\cite{akbas2026boolean} combined the signing theorem of
Guo, Fang, and Lu with Gaussian matrix estimates and a replica
argument to prove Boolean matrix small-ball inequalities, obtaining
applications to the Kadison--Singer problem, the matrix Spencer
conjecture, and the Koml\'os conjecture.

\paragraph{Subsequent developments.}
After this paper appeared online, Guo, Fang, and Lu~\cite{guo2026polynomial} gave a
deterministic polynomial-time algorithm for constant-discrepancy
Koml\'os signing in the unit-cost real-RAM model.
Bandeira~\cite{bandeira2026simplification} gave another simplification
of their original proof, using Dirichlet energy and Banaszczyk's
transform to obtain the constant $9\pi$.
Dahia~\cite{dahia2026komlos} formalized our proof with constant $36$
in Lean~4.
We also learned of Kintali's counterexample to the fixed-order
strong Koml\'os conjecture~\cite{kintali2026strong}, discussed above.

\paragraph{AI methodology.}
We used ChatGPT-6 Astra to iteratively refine the proof and the
writing, with an emphasis on simplifying the argument and improving
the clarity of the exposition.

\paragraph{Acknowledgements.}
We thank Nikhil Bansal and Raghu Meka for helpful feedback and suggestions on an earlier draft.

\paragraph{Paper organization.}
We give an overview of both lemmas in \Cref{sec:overview}, then prove
\Cref{lemma:balancing} in \Cref{sec:balancing} and
\Cref{lemma:cube} in \Cref{sec:cube}.

\section{Proof overview}
\label{sec:overview}

\subsection{From near invariance to signed sums}

We prove \Cref{lemma:balancing} by induction on the number of
vectors, simultaneously in all dimensions. At each step, we split
once in the direction of the last vector, apply induction to the
remaining vectors, and pull the resulting convex combination back
to the original support.

\paragraph{One split.}
Let $v=v_n$. We split $P$ to form a distribution $Q$ on
$\R^d\times\{0,1\}$. A new state $(x,b)$ has possible parents
$x+3v$ and $x-3v$ in $P$; a parent is available if it has positive
mass in $P$. At each position $x$, the split assigns half the
larger parent mass to $(x,0)$ and half the smaller parent mass to
$(x,1)$. Every state of positive mass therefore has an available
parent, and a state with $b=1$ has both.

We regard the bit $b$ as an extra real coordinate. The first $d$
coordinates retain their mean $\mu(P)$. The last coordinate has
mean $\beta:=\Pr_Q[b=1]$, which is half the overlap of the two
translates of $P$. Since these translates differ by $6v$, the
assumption $\SDS{P}{6v}\le1/3$ gives $\beta\ge1/3$.
Thus $\mu(Q)=(\mu(P),\beta)$.

\paragraph{Induction.}
Extend each remaining vector to $(v_i,0)\in\R^{d+1}$.
Splitting does not increase shift distance in these directions,
so induction applies to $Q$. It gives signs for the first $n-1$
vectors and a point
\[
(\mu(P)+s,\beta)\in\operatorname{conv}(\supp(Q)),
\qquad
s:=\sum_{i=1}^{n-1}\eps_i v_i.
\]
Represent this point as the mean of a probability distribution $R$
supported on $\supp(Q)$. Its last coordinate forces
$\Pr_R[b=1]=\beta\ge1/3$. This is why we keep the extra
coordinate when applying induction: it ensures that the chosen
convex combination still has enough mass with two available parents.

\paragraph{Pullback.}
Send each entry of $R$ with $b=0$ to any available parent in $P$.
These entries have total mass at most two-thirds, and each move
is by $\pm3v$. Their contribution to the change in the first $d$
coordinates of the mean is therefore $av$ for some $|a|\le2$.
The entries with $b=1$ have both parents available and total mass
at least one-third. By dividing this mass between its parents,
we can contribute any scalar multiple of $v$ with coefficient
between $-1$ and $1$.
Choose $\eps_n\in\{-1,1\}$ with $|\eps_n-a|\le1$ and use
this mass to contribute $(\eps_n-a)v$. The resulting distribution
is supported on $\supp(P)$ and has mean
$\mu(P)+s+\eps_n v$, completing the induction step.

\paragraph{Connection to the original proof.}
Guo, Fang, and Lu~\cite[Section~4]{guo2026vector} introduce an
extra coordinate and rearrange a density along it, placing larger
values closer to zero. This preserves mass and does not increase
total variation distance between translates in the original
coordinates. Our splitting operator uses a bit as the extra
coordinate and sorts two translated probabilities. Preserving
the mean of this coordinate under induction supplies the mass
needed for the pullback. This simplifies the argument, at the
cost of a larger constant in the final bound.

\subsection{A near-invariant distribution in the cube}

For \Cref{lemma:cube}, we first construct a continuous probability
density $F$ on $[-6,6]^d$. We take $F$ to be a product of
one-dimensional densities, each proportional to $\max\{6-|t|,0\}^2$.
To estimate the effect of translation, we work with $f=\sqrt{F}$,
a product of tent functions. When we square the derivative of $f$
in direction $v$ and integrate, the mixed terms vanish by symmetry,
leaving a multiple of $\sum_k v_k^2=\|v\|_2^2$. The fundamental
theorem of calculus and Cauchy--Schwarz then give total variation
distance at most $\|v\|_2/\sqrt{12}$ for a translation by $v$.

We then discretize the construction to make it finitely supported. We choose a positive integer $N$ such that the grid
$N^{-1}\mathbb{Z}^d$ contains all the given rational vectors, and
round each coordinate to the nearest grid point. Since $\pm6$
are grid points, rounding keeps the support in the cube. It also
preserves symmetry, giving a finitely supported distribution with
mean zero. Rounding commutes with translations by grid vectors
and cannot increase total variation
distance. The same bound therefore holds for the rounded
distribution. Since $\|v_i\|_2\le1$, this bound is less than
$1/3$ for each given vector.

\section{From near invariance to signed sums}
\label{sec:balancing}

The proof of \Cref{lemma:balancing} uses one split, an application
of induction, and one pullback. The split adds a coordinate taking
values in $\{0,1\}$, so we view its output as a distribution on
$\R^{d+1}$. For $u\in\R^d$, write $\bar u:=(u,0)$ for the
corresponding vector in $\R^{d+1}$.

\begin{definition}[Splitting operator]
\label{def:splitting}
Let $v\in\R^d$, and let $P$ be a finitely supported nonnegative
function on $\R^d$. Define $T_vP$ on $\R^{d+1}$ by
\[
\begin{aligned}
(T_vP)(x,0)&:=\frac12\max\{P(x+v),P(x-v)\},\\
(T_vP)(x,1)&:=\frac12\min\{P(x+v),P(x-v)\},
\end{aligned}
\]
and set it to zero whenever the last coordinate is not $0$ or $1$.
\end{definition}

We call $x+v$ and $x-v$ the two possible parents of $(x,b)$.
A parent is available if it belongs to $\supp(P)$. Every state
in $\supp(T_vP)$ has at least one available parent, and a state
with $b=1$ has both. We will use this freedom in the pullback.

\paragraph{Mass and mean.}
Adding the maximum and minimum gives
\begin{equation}
\label{eq:split-sum}
\sum_{b=0}^1(T_vP)(x,b)
=\frac12\bigl(P(x+v)+P(x-v)\bigr).
\end{equation}
Summing over $x$ shows that $T_v$ preserves total mass, even for
unnormalized nonnegative inputs. In particular, it maps probability
distributions to probability distributions.

Now suppose $P$ is a probability distribution. The two opposite
shifts cancel in the mean of the first $d$ coordinates:
\[
\sum_{x,b}(T_vP)(x,b)x
=\frac12\sum_x(x-v)P(x)+\frac12\sum_x(x+v)P(x)
=\mu(P).
\]
The mean of the last coordinate is its probability of being $1$.
By the overlap interpretation of shift distance,
\begin{equation}
\label{eq:one-bit}
\begin{aligned}
\Pr_{(x,b)\sim T_vP}[b=1]
&=\frac12\sum_x\min\{P(x+v),P(x-v)\}\\
&=\frac12\sum_x\min\{P(x),P(x-2v)\}\\
&=\frac{1-\SDS{P}{2v}}2.
\end{aligned}
\end{equation}
Thus, writing $\beta:=\Pr_{T_vP}[b=1]$, we have
\[
\mu(T_vP)=(\mu(P),\beta).
\]

\begin{claim}[Shift distance does not increase]
\label{claim:shift-contraction}
Let $P$ be a finitely supported probability distribution on
$\R^d$, and let $u,v\in\R^d$. Then
\begin{equation}
\label{eq:shift-contraction}
\SDS{T_vP}{\bar u}\le\SDS{P}{u}.
\end{equation}
\end{claim}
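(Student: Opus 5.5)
We will prove \eqref{eq:shift-contraction} by a pointwise comparison: the contribution of each position $x$ to $\SDS{T_vP}{\bar u}$ is bounded by a matching contribution to $\SDS{P}{u}$. Since $\bar u=(u,0)$ leaves the last coordinate unchanged, unfolding \Cref{def:shift-distance} gives
\[
2\,\SDS{T_vP}{\bar u}
=\sum_{x\in\R^d}\sum_{b=0}^1\bigl|(T_vP)(x,b)-(T_vP)(x-u,b)\bigr|.
\]
Fix $x$ and write $a_1:=P(x+v)$, $a_2:=P(x-v)$, $c_1:=P(x-u+v)$ and $c_2:=P(x-u-v)$. The inner sum over $b$ then equals
\[
\tfrac12\bigl(|\max\{a_1,a_2\}-\max\{c_1,c_2\}|+|\min\{a_1,a_2\}-\min\{c_1,c_2\}|\bigr).
\]

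The key step is the elementary rearrangement inequality
\[
|\max\{a_1,a_2\}-\max\{c_1,c_2\}|+|\min\{a_1,a_2\}-\min\{c_1,c_2\}|\le|a_1-c_1|+|a_2-c_2|
\]
for all reals. This says that sorting a pair is a contraction in $\ell_1$. We will verify it by cases. If the pairs $(a_1,a_2)$ and $(c_1,c_2)$ are ordered the same way, the two sides are equal. Otherwise, by symmetry, take $a_1\ge a_2$ and $c_1\le c_2$. Then the left side is $|a_1-c_2|+|a_2-c_1|$. Each of these terms is bounded using $a_2\le a_1$ and $c_1\le c_2$: for instance, $a_1-c_2\le a_1-c_1$ and $c_2-a_1\le c_2-a_2$. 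Checking the four sign patterns, or using the standard fact that the sorted pairing minimizes $\ell_1$ cost, then gives the inequality. This case check is the only nontrivial point, though it is routine.

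Finally we sum over $x$. This gives
\[
2\,\SDS{T_vP}{\bar u}\le\tfrac12\sum_x\bigl(|P(x+v)-P(x+v-u)|+|P(x-v)-P(x-v-u)|\bigr).
\]
Reindexing each of the two sums by $y=x\pm v$ shows that each equals $\sum_y|P(y)-P(y-u)|=2\,\SDS{P}{u}$. The right side is therefore $\tfrac12\cdot 4\,\SDS{P}{u}=2\,\SDS{P}{u}$, which proves \eqref{eq:shift-contraction}. All sums are finite because $P$ has finite support.
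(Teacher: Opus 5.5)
Your proof is correct, but it takes a different route from the paper's. The paper makes no pointwise comparison. It takes the common mass $H=\min\{P,P+u\}$ and uses two facts about $T_v$: it is monotone, and it commutes with translation, $T_v(P+u)=T_vP+\bar u$. Together these give $T_vH\le\min\{T_vP,T_vP+\bar u\}$. Mass preservation of $T_v$ on the unnormalized $H$ then yields $1-\SDS{T_vP}{\bar u}\ge\sum_x H(x)=1-\SDS{P}{u}$. You instead rely on the $\ell_1$-contraction of sorting a pair, then reindex by $y=x\pm v$.

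The paper's argument needs only these three structural properties of $T_v$ (order preservation, mass preservation, translation equivariance). It avoids any case analysis and reuses the overlap interpretation that drives the rest of the proof. It would also apply unchanged to any operator with those properties, such as the continuous rearrangement of Guo, Fang, and Lu.

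Your argument is a self-contained local computation that gives a finer, fiberwise statement. The contribution of $\{x\}\times\{0,1\}$ is at most the average of the contributions at the two parents $x\pm v$. It also never uses nonnegativity or normalization of $P$. The cost is the case check, and there the one-sided bounds you display are not enough by themselves. What actually closes the opposite-order case is the four-sign-pattern check. Alternatively, note that $a_1-c_2$ and $a_2-c_1$ both lie between $a_2-c_2$ and $a_1-c_1$ and have the same sum, so convexity of $|\cdot|$ gives the inequality directly.
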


\begin{proof}
Both maximum and minimum are nondecreasing in each argument,
so $T_v$ preserves pointwise inequalities between nonnegative
functions. The definition also gives
$T_v(P+u)=T_vP+\bar u$.

Consider the mass common to $P$ and $P+u$,
given by $H(x):=\min\{P(x),P(x-u)\}$.
Since $H\le P$ and $H\le P+u$ pointwise, applying $T_v$ to
these inequalities gives
\[
T_vH\le\min\{T_vP,T_v(P+u)\}
=\min\{T_vP,T_vP+\bar u\}.
\]
Since $T_v$ preserves the total mass of $H$,
\[
\begin{aligned}
1-\SDS{T_vP}{\bar u}
&=\sum_{x,b}\min\{(T_vP)(x,b),(T_vP+\bar u)(x,b)\}\\
&\ge\sum_{x,b}(T_vH)(x,b)\\
&=\sum_xH(x)
=1-\SDS{P}{u}.
\end{aligned}
\]
This proves \eqref{eq:shift-contraction}.
\end{proof}

\begin{proof}[Proof of \Cref{lemma:balancing}]
We induct on $n$, proving the statement simultaneously for all
dimensions $d$. For $n=0$, the conclusion is
$\mu(P)\in\operatorname{conv}(\supp(P))$, which holds because
$\mu(P)$ is a convex combination of the support points.

Suppose $n\ge1$, let $v=v_n$, and set $Q:=T_{3v}P$.
By \eqref{eq:one-bit},
\[
\beta:=\Pr_{(x,b)\sim Q}[b=1]
=\frac{1-\SDS{P}{6v}}2\ge\frac13,
\qquad
\mu(Q)=(\mu(P),\beta).
\]
For $i<n$, \Cref{claim:shift-contraction} gives
$\SDS{Q}{6\bar v_i}\le\SDS{P}{6v_i}\le1/3$.
Apply the induction hypothesis to $Q$ and
$\bar v_1,\ldots,\bar v_{n-1}\in\R^{d+1}$.
It yields signs $\eps_1,\ldots,\eps_{n-1}$ such that
\[
(\mu(P)+s,\beta)\in\operatorname{conv}(\supp(Q)),
\qquad
s:=\sum_{i=1}^{n-1}\eps_i v_i.
\]
Choose a probability distribution $R$ supported on $\supp(Q)$
whose mean is this point. Since the last coordinate takes only
values $0$ and $1$, its mean determines its probability of being $1$:
\[
\Pr_{(x,b)\sim R}[b=1]=\beta\ge\frac13.
\]

We now pull $R$ back to $\supp(P)$ by moving each state $(x,b)$
to its available parents $x+3v$ and $x-3v$. For the states with
$b=0$, choose any available parent and send the entire mass there.
Their total mass is $1-\beta$, so their contribution to the change
in the first $d$ coordinates of the mean is $av$ for some scalar
$a$ satisfying
\[
|a|\le3(1-\beta)\le2.
\]

Every state with $b=1$ has both parents available. Send a common
fraction $t\in[0,1]$ of each such state's mass to $x+3v$ and
the remaining fraction $1-t$ to $x-3v$. Since these states have
total mass $\beta$, their contribution to the change in the mean is
\[
3\beta t v-3\beta(1-t)v=3\beta(2t-1)v.
\]
As $t$ varies from $0$ to $1$, the coefficient ranges over
$[-3\beta,3\beta]$.
Choose $\eps_n\in\{-1,1\}$ to have the sign of $a$, taking
$\eps_n=1$ when $a=0$. Since $a\in[-2,2]$, we have
\[
|\eps_n-a|\le1\le3\beta.
\]
Set
\[
t:=\frac12+\frac{\eps_n-a}{6\beta}.
\]
The preceding inequality ensures that $t\in[0,1]$, and
$3\beta(2t-1)=\eps_n-a$. Thus the one-bit states contribute
exactly $(\eps_n-a)v$. Together with the contribution $av$ from the
zero-bit states, the total change in the first $d$ coordinates
of the mean is exactly $\eps_n v$.

Let $S$ be the resulting distribution on $\R^d$, adding
contributions that reach the same parent. Every state transfers
its entire mass to available parents, so $S$ is a
probability distribution supported on $\supp(P)$. Its mean is
\[
\mu(S)
=\mu(P)+s+\eps_n v
=\mu(P)+\sum_{i=1}^n\eps_i v_i
\in\operatorname{conv}(\supp(P)),
\]
as required.
\end{proof}

\section{A near-invariant distribution in the cube}
\label{sec:cube}

We first prove a continuous version of \Cref{lemma:cube}, using
a product density whose square root has small directional
derivatives. We then deduce \Cref{lemma:cube} by rounding this
density to a grid.

A probability density is a nonnegative integrable function
$F:\R^d\to\R$ with $\int_{\R^d}F(x)\,dx=1$.
We say that $F$ is supported on $S$ if it vanishes outside $S$.
For two densities, statistical distance is defined by replacing
the sum with an integral:
$\SD{F}{H}:=\frac12\int_{\R^d}|F(x)-H(x)|\,dx$.
We write $\|h\|_{L^2}:=(\int_{\R^d}|h(x)|^2\,dx)^{1/2}$ for the
$L^2$ norm, $\nabla h$ for the gradient, and $v\cdot\nabla h$ for
the directional derivative along $v$.

\begin{lemma}[A continuous near-invariant distribution in the cube]
\label{lemma:continuous-cube}
For every positive integer $d$, there is a continuous probability
density $F$ supported on $[-6,6]^d$, symmetric about the origin,
such that for every $v\in\R^d$,
\[
\frac12\int_{\R^d}|F(x+v)-F(x)|\,dx
\le\frac{\|v\|_2}{\sqrt{12}}.
\]
\end{lemma}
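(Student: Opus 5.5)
We take $F:=f^2$ with $f(x):=\prod_{k=1}^d g(x_k)$, where $g(t):=\frac1{12}\max\{6-|t|,0\}$ is a tent function. Since $\int_{-6}^{6}(6-|t|)^2\,dt=144$, we have $\int g^2=1$, so $F$ is a continuous probability density. It vanishes outside $[-6,6]^d$ and is symmetric about the origin because $g$ is even. The proof then proceeds in three steps: reduce total variation to an $L^2$ difference of square roots, bound that difference by a directional derivative, and compute the derivative's $L^2$ norm exactly using the product structure.

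\emph{Step 1 (square roots).} Write $f_v(x):=f(x+v)$. Pointwise, $|F(x+v)-F(x)|=|f_v(x)-f(x)|\,(f_v(x)+f(x))$. Cauchy--Schwarz and $\|f_v+f\|_{L^2}\le\|f_v\|_{L^2}+\|f\|_{L^2}=2$ then give
\[
\frac12\int_{\R^d}|F(x+v)-F(x)|\,dx\le\|f_v-f\|_{L^2}.
\]

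\emph{Step 2 (fundamental theorem of calculus).} The function $f$ is Lipschitz, being a product of bounded Lipschitz functions. Hence $s\mapsto f(x+sv)$ is absolutely continuous, and
\[
f(x+v)-f(x)=\int_0^1 (v\cdot\nabla f)(x+sv)\,ds,
\]
where $\nabla f$ exists almost everywhere; concretely, it fails to exist only on the hyperplanes $x_k\in\{0,\pm6\}$. By Cauchy--Schwarz (or Jensen) in $s$, followed by Fubini and translation invariance of Lebesgue measure,
\[
\|f_v-f\|_{L^2}^2\le\int_0^1\|(v\cdot\nabla f)(\cdot+sv)\|_{L^2}^2\,ds=\|v\cdot\nabla f\|_{L^2}^2.
\]
This is the step I expect to need the most care. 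The issue is that $\nabla f$ is only defined almost everywhere, so one must check that the FTC holds on almost every line. It does, since $f$ is piecewise polynomial along each line, with finitely many kinks.

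\emph{Step 3 (exact computation).} Write $\partial_k f=g'(x_k)\prod_{j\ne k}g(x_j)$. Expanding gives
\[
\|v\cdot\nabla f\|_{L^2}^2=\sum_{k,l}v_kv_l\int\partial_kf\,\partial_lf.
\]
For $k\ne l$, the integral factorizes and contains the factor $\int g'g\,dt=0$, since $g'g$ is odd. For $k=l$, the integral equals
\[
\int (g')^2\,dt\cdot\prod_{j\ne k}\int g^2\,dt=\frac{12}{144}=\frac1{12},
\]
because $(g')^2=1/144$ on an interval of length $12$. Therefore $\|v\cdot\nabla f\|_{L^2}^2=\|v\|_2^2/12$. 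Combining the three steps yields
\[
\frac12\int_{\R^d}|F(x+v)-F(x)|\,dx\le\frac{\|v\|_2}{\sqrt{12}},
\]
as claimed.
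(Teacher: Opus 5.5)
Your proposal is correct and follows the paper's proof essentially step for step. It uses the same tent-function product $f$ with $F=f^2$, the same difference-of-squares Cauchy--Schwarz reduction to $\|f(\cdot+v)-f\|_{L^2}$, and the fundamental theorem of calculus along segments with Cauchy--Schwarz in $s$. It also uses the same exact computation $\int(v\cdot\nabla f)^2=\|v\|_2^2/12$ via $\int gg'=0$. Your added justification of the almost-everywhere FTC, where $f$ fails to be differentiable only on the hyperplanes $x_k\in\{0,\pm6\}$, is a slightly more careful version of the paper's remark about breakpoints.
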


\begin{proof}
Define
\[
b(t):=\frac1{12}\max\{6-|t|,0\},
\qquad
f(x):=\prod_{k=1}^d b(x_k),
\qquad
F(x):=f(x)^2.
\]
The elementary identities
\[
\int_{\R}b^2=1,
\qquad
\int_{\R}(b')^2=\frac1{12},
\qquad
\int_{\R}bb'=0
\]
show that $F$ is a continuous probability density supported on
$[-6,6]^d$. Since $b$ is even, $F(-x)=F(x)$, so the density is
symmetric about the origin and has mean zero.
These identities also give, for every $v\in\R^d$,
\begin{equation}
\label{eq:directional-energy}
\int_{\R^d}(v\cdot\nabla f(x))^2\,dx
=\frac{\|v\|_2^2}{12}.
\end{equation}
Indeed, the mixed terms vanish because they contain the factor
$\int bb'=0$, while the $k$th diagonal term is $v_k^2/12$.

Because $b$ is continuous and linear on each of finitely many
intervals, the fundamental theorem of calculus along line segments
gives, for almost every $x$,
\[
f(x+v)-f(x)=\int_0^1 v\cdot\nabla f(x+tv)\,dt.
\]
Values of the derivatives at the breakpoints do not affect any
of the integrals. Cauchy--Schwarz therefore gives
\[
\begin{aligned}
\int_{\R^d}|f(x+v)-f(x)|^2\,dx
&\le\int_0^1\int_{\R^d}
   |v\cdot\nabla f(x+tv)|^2\,dx\,dt\\
&=\frac{\|v\|_2^2}{12}.
\end{aligned}
\]
Factoring a difference of squares and applying Cauchy--Schwarz
once more, we obtain
\begin{equation}
\label{eq:continuous-shift}
\begin{aligned}
\frac12\int_{\R^d}|F(x+v)-F(x)|\,dx
&\le\frac12\|f(\cdot+v)-f\|_{L^2}
                  \|f(\cdot+v)+f\|_{L^2}\\
&\le\|f(\cdot+v)-f\|_{L^2}
\le\frac{\|v\|_2}{\sqrt{12}},
\end{aligned}
\end{equation}
where we used $\|f\|_{L^2}=\|f(\cdot+v)\|_{L^2}=1$.
\end{proof}

We now round the continuous density to obtain the finitely
supported distribution required by \Cref{lemma:cube}.

\begin{proof}[Proof of \Cref{lemma:cube}]
Let $F$ be the symmetric density from \Cref{lemma:continuous-cube}.
Choose a positive integer $N$ such that $Nv_i\in\mathbb{Z}^d$
for every $i$, and let
$G=N^{-1}\mathbb{Z}^d=\{k/N:k\in\mathbb{Z}^d\}$ be the resulting grid.
Round each coordinate of a sample from $F$ to the nearest multiple
of $1/N$. The resulting distribution is
\[
P(x):=\int_C F(x+z)\,dz
\quad (x\in G),
\qquad
C:=\left[-\frac1{2N},\frac1{2N}\right)^d,
\]
and $P(x)=0$ outside $G$. Because $-6$ and $6$ are grid points,
rounding keeps the support in $[-6,6]^d$, so $P$ has finite
support. Symmetry gives $P(-x)=P(x)$ and hence $\mu(P)=0$;
cell boundaries have zero probability.

Rounding commutes with grid translations and cannot increase
total variation. Explicitly, for $v\in G$,
\[
\begin{aligned}
\SDS{P}{v}
&=\frac12\sum_{x\in G}
\left|\int_C\bigl(F(x+z-v)-F(x+z)\bigr)\,dz\right|\\
&\le\frac12\int_{\R^d}|F(z-v)-F(z)|\,dz
\le\frac{\|v\|_2}{\sqrt{12}}.
\end{aligned}
\]
The first inequality uses that the cells $x+C$, $x\in G$, partition
$\R^d$; the last uses \Cref{lemma:continuous-cube} with $-v$.
Applying this to each $v_i$ yields
$\SDS{P}{v_i}\le1/\sqrt{12}<1/3$, as required.
\end{proof}

\bibliographystyle{abbrv}
\bibliography{komlos}

\end{document}